%
\input ./style/arxiv-vmsta.cfg
\documentclass[numbers,compress,v1.0.1]{vmsta}

\volume{2}
\issue{4}
\pubyear{2015}
\firstpage{327}
\lastpage{341}
\doi{10.15559/15-VMSTA34}


\startlocaldefs
\newcommand{\longrightarrowfdd}{\stackrel{f.d.d.}{\longrightarrow}}

\newtheorem{prop}{Proposition}
\newtheorem{thm}{Theorem}
\newtheorem{lemma}{Lemma}

\theoremstyle{definition}
\newtheorem{rem}{Remark}
\newtheorem{defin}{Definition}

\hyphenation{de-si-de-rium}

\urlstyle{rm}
\allowdisplaybreaks
\endlocaldefs

\begin{document}
\begin{frontmatter}

\title{Tempered Hermite process}

\author{\inits{F.}\fnm{Farzad}\snm{Sabzikar}}\email{sabzikar@iastate.edu}

\address{Department of Statistics, Iowa State University, Ames, IA
50010, USA}

\markboth{F. Sabzikar}{Tempered Hermite process}

\begin{abstract}
A tempered Hermite process modifies the power law kernel in the
time domain representation of a Hermite process by multiplying an exponential
tempering factor $\lambda>0$ such that the process is well defined for
Hurst parameter $H>\frac{1}{2}$.
A~tempered Hermite process is the weak convergence limit of a certain
discrete chaos process.
\end{abstract}

\begin{keyword}
Discrete chaos\sep
limit theorem\sep
Wiener--It\^{o} integral\sep
Fourier transform

\MSC[2010] 60F17\sep 60G23\sep 60G20
\end{keyword}

\received{9 July 2015}
\revised{7 September 2015}
\accepted{11 September 2015}
\publishedonline{25 September 2015}
\end{frontmatter}

\section{Introduction}
The Hermite processes of order $k=1,2,\ldots$ are defined as multiple
Wiener--It\^{o} integrals
%
\begin{equation}
\label{eq:THPdefnorderk} Z_{H}^{k}(t):={\int_{\mathbb{R}^k}^{'}
\int_{0}^{t} \Biggl(\prod
^{k}_{i=1}(s-y_i)_{+}^{d-1}
\Biggr)\,ds\,B(dy_1)\ldots B(dy_k)},
\end{equation}
where $d=\frac{1}{2}-\frac{1-H}{k}\in(\frac{1}{2}-\frac
{1}{2k},\frac{1}{2})$ and $\frac{1}{2}<H<1$ (the prime $'$ on the
integral sign shows that one does not integrate on the diagonals
$x_{i}=x_{j}$, $i\neq j$). They are self-similar processes with
stationary increments (see \cite{Dobrushinmajor,Taqqu}).

In this paper, we introduce a new class of stochastic processes, which
we call tempered Hermite processes. Tempered Hermite processes modify
the kernel of $Z_{H}^{k}$ by multiplying an exponential
tempering factor $\lambda>0$ such that they are well defined for Hurst
parameter $H>\frac{1}{2}$. Tempered Hermite processes are not
self-similar processes, but they have a scaling property, involving
both the time scale and the tempering parameter. The scaling property
enable us to show that the tempered Hermite processes are the weak
convergence limits of certain discrete chaos processes.\vadjust{\eject}

The paper is organized as follows. In Section \ref{sec2}, we define
tempered Hermite processes and derive some their basic properties. In
Section~\ref{sec3}, we present our main result on the weak convergence
to tempered Hermite processes.
\section{Tempered Hermite process}\label{sec2}
Let $B=\{B(t), t\in\mathbb{R}\}$ be a real-valued Brownian motion on
the real line, a~process with stationary independent increments such
that $B(t)$ has a Gaussian distribution with mean zero and variance
$|t|$ for all $t\in\mathbb{R}$. Then the Wiener--It\^{o} integrals
\begin{equation*}
I_{k}(f):=\int_{\mathbb{R}^k}^{'}
f(x_1,\ldots,x_k)B(dx_1)\ldots
B(dx_k)
\end{equation*}
are defined for all functions $f\in L^{2}(\mathbb{R}^k)$.
The prime $'$ on the integral sign shows that one does not integrate on
the diagonals $x_{i}=x_{j}$, $i\neq j$. See, for example, \cite
[Chapter 4]{koul}.
\begin{defin}
Let $H>\frac{1}{2}$ and $\lambda>0$. The process
%
\begin{equation}
\label{eq:THPdefn} Z_{H,\lambda}^{k}(t):={\int_{\mathbb{R}^k}^{'}
\int_{0}^{t}\prod_{i=1}^{k}
\bigl({(s-y_i)_{+}^{d-1}}e^{-\lambda(s-y_i)_{+}}
\bigr)\,ds\,B(dy_1)\ldots B(dy_k)},
\end{equation}
where $(x)_{+}=xI(x>0)$ and $d=\frac{1}{2}-\frac{1-H}{k}\in(\frac
{1}{2}-\frac{1}{2k},\infty)$, is called a tempered Hermite process of
order k.
\end{defin}
The next lemma shows that $Z_{H,\lambda}^{k}(t)$, given by \eqref
{eq:THPdefn}, is well defined for any $t\geq0$.
\begin{lemma}\label{lem:g squar integrable}
The function
%
\begin{equation}
\label{eq:integrand} h_{t}(y_1,\dots,y_k):=\int
_{0}^{t}\prod_{i=1}^{k}(s-y_i)_{+}^{d-1}e^{-\lambda(s-y_i)_{+}}\,ds
\end{equation}
is well defined in $L^{2}(\mathbb{R}^k)$ for any $H>\frac{1}{2}$ and
$\lambda>0$.
\end{lemma}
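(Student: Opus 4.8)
The plan is to compute $\|h_t\|_{L^2(\mathbb{R}^k)}^2$ explicitly as a double integral over $[0,t]^2$ and thereby reduce the whole question to the local integrability of $|u-s|^{2H-2}$. Write $g_s(y):=(s-y)_+^{d-1}e^{-\lambda(s-y)_+}$, so that $h_t(y_1,\dots,y_k)=\int_0^t\prod_{i=1}^k g_s(y_i)\,ds$. Since all the integrands are nonnegative, Tonelli's theorem lets one interchange all the integrals and factor the $\mathbb{R}^k$-integral into a product, giving
\[
\|h_t\|_{L^2(\mathbb{R}^k)}^2=\int_0^t\!\!\int_0^t\prod_{i=1}^k\Bigl(\int_{\mathbb{R}}g_s(y)\,g_u(y)\,dy\Bigr)\,ds\,du=\int_0^t\!\!\int_0^t R(s,u)^k\,ds\,du ,
\]
where $R(s,u):=\int_{\mathbb{R}}g_s(y)\,g_u(y)\,dy$. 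It therefore suffices to show that this last integral is finite.

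Next I would evaluate $R(s,u)$. Fix $0\le s<u\le t$ and put $r=u-s$. The product $g_s(y)g_u(y)$ vanishes unless $y<s$, in which case $(s-y)_+=s-y$ and $(u-y)_+=(s-y)+r$; the substitution $v=s-y$ gives
\[
R(s,u)=e^{-\lambda r}\int_0^\infty v^{d-1}(v+r)^{d-1}e^{-2\lambda v}\,dv .
\]
Here the tempering factor $e^{-2\lambda v}$ makes the integral converge at $v=\infty$ for \emph{every} value of $d$ --- this is precisely where $\lambda>0$ is used and where the classical restriction $H<1$ becomes unnecessary --- while $d>\frac{1}{2}-\frac{1}{2k}>0$ makes $v^{d-1}$ integrable at $v=0$; hence $R(s,u)<\infty$ for all $s\ne u$.

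The heart of the matter is the rate at which $R(s,u)$ blows up as $r\to0^+$, which I would obtain by splitting the $v$-integral at $v=r$. On $(0,r)$ one bounds $(v+r)^{d-1}$ by a constant multiple of $r^{d-1}$ and integrates $v^{d-1}$, producing a contribution of order $r^{2d-1}$. On $(r,\infty)$ one bounds $(v+r)^{d-1}$ by a constant multiple of $v^{d-1}$ and uses $e^{-2\lambda v}\le1$ to reduce to $\int_r^\infty v^{2d-2}e^{-2\lambda v}\,dv$, which is $O(1)$ when $d>\frac{1}{2}$ (a convergent Gamma integral, finite because $\lambda>0$), $O(\log(1/r))$ when $d=\frac{1}{2}$, and $O(r^{2d-1})$ when $d<\frac{1}{2}$. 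Combining the two pieces, and using that $R(s,u)$ is bounded on $\{(s,u)\in[0,t]^2:|u-s|\ge\delta\}$ for every $\delta>0$, one obtains a constant $C=C(d,k,\lambda,t)$ with
\[
R(s,u)\le C\bigl(1+|u-s|^{2d-1}\bigr)\quad(d\ne\tfrac{1}{2}),\qquad R(s,u)\le C\bigl(1+\log(1/|u-s|)\bigr)\quad(d=\tfrac{1}{2}),
\]
for all $0\le s\ne u\le t$.

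Finally I would raise this to the $k$th power. Since $2d-1=\frac{2(H-1)}{k}$, we have $k(2d-1)=2H-2$, so $R(s,u)^k\le C'\bigl(1+|u-s|^{2H-2}\bigr)$ when $H\ne1$, and $R(s,u)^k\le C'\bigl(1+\log(1/|u-s|)\bigr)^k$ when $H=1$. In either case $\int_0^t\int_0^t R(s,u)^k\,ds\,du<\infty$: the power term contributes $\int_0^t\int_0^t|u-s|^{2H-2}\,ds\,du=2\int_0^t(t-r)r^{2H-2}\,dr$, which is finite precisely because $2H-2>-1$, that is $H>\frac{1}{2}$, and the logarithmic term is integrable since $\int_0^t\bigl(\log(1/r)\bigr)^k\,dr<\infty$. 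This establishes $h_t\in L^2(\mathbb{R}^k)$. The main obstacle is the $r\to0^+$ estimate in the third step: one has to control the blow-up of $\int_0^\infty v^{d-1}(v+r)^{d-1}e^{-2\lambda v}\,dv$ across the three regimes $d<\frac{1}{2}$, $d=\frac{1}{2}$, $d>\frac{1}{2}$ (equivalently $H<1$, $H=1$, $H>1$) and to keep the constant uniform in $(s,u)$.
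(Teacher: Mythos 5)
Your argument is correct, and its skeleton coincides with the paper's: both reduce $\|h_t\|_{L^2(\mathbb{R}^k)}^2$ via Tonelli to $\int_0^t\int_0^t R(s,u)^k\,ds\,du$ with
\[
R(s,u)=e^{-\lambda|u-s|}\int_0^\infty v^{d-1}\bigl(v+|u-s|\bigr)^{d-1}e^{-2\lambda v}\,dv,
\]
and both then split into the three regimes $d<\frac12$, $d=\frac12$, $d>\frac12$. The genuine difference is in how the blow-up of $R(s,u)$ as $|u-s|\to0$ is controlled. The paper evaluates the $v$-integral in closed form via the Gradshteyn--Ryzhik formula \eqref{eq:standard integral formula}, expressing $R(s,u)$ through the modified Bessel function $K_{\frac12-d}(\lambda|u-s|)$, and then quotes inequalities for $K_\nu$ (Gaunt's bound $K_{\nu}(z)<z^{-\nu}2^{\nu-1}\varGamma(\nu)$, Baricz's ratio bound, and $K_0(z)\sim-\log z$) to extract exactly the exponents $|u-s|^{2d-1}$, $O(1)$, and $\log(1/|u-s|)$ that you obtain. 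Your substitute --- splitting the $v$-integral at $v=|u-s|$ and bounding $(v+r)^{d-1}$ by a constant times $r^{d-1}$ on $(0,r)$ and by a constant times $v^{d-1}$ on $(r,\infty)$ --- is elementary and self-contained, needing no special functions; it isolates cleanly where $\lambda>0$ enters (convergence at $v=\infty$ for all $d$, hence all $H>\frac12$) and where $H>\frac12$ enters ($2H-2>-1$). What the paper's route buys in return is the exact closed-form kernel $[|u-v|^{d-\frac12}K_{\frac12-d}(\lambda|u-v|)]^k$, which it reuses verbatim to state the covariance function in Proposition \ref{prop:THPcovariance}; your bounds prove finiteness but do not produce that formula. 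One cosmetic slip: $\frac12-\frac{1}{2k}=0$ when $k=1$, so your chain $d>\frac12-\frac{1}{2k}>0$ should read $d>\frac12-\frac{1}{2k}\ge0$; the needed conclusion $d>0$ still holds and nothing else is affected.
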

\begin{proof}
To show that $h_{t}(y_1,\ldots,y_k)$ is in $L^{2}(\mathbb{R}^k)$, we write
\begin{align}
\label{eq:variancegeneral} %
&\int_{\mathbb{R}^k}h_{t}(y_1,\ldots,y_k)^{2}\,dy_1\ldots dy_k\notag\\
&\quad =\int_{\mathbb{R}^k} \Biggl[\int_{0}^{t}\int_{0}^{t}\prod_{i=1}^{k}(s_1-y_i)_{+}^{d-1}e^{-\lambda(s_1-y_i)_{+}}(s_2-y_i)_{+}^{d-1}\notag\\
&\qquad \times e^{-\lambda(s_2-y_i)_{+}}\,ds_1\,ds_2 \Biggr]\,dy_1\ldots dy_k\notag\\
&\quad =2\int_{0}^{t}ds_1\int_{s_1}^{t}ds_2 \Biggl[\int_{\mathbb{R}^k}\prod_{i=1}^{k}(s_1-y_i)_{+}^{d-1}e^{-\lambda(s_1-y_i)_{+}}(s_2-y_i)_{+}^{d-1}\notag\\
&\qquad \times e^{-\lambda(s_2-y_i)_{+}}\,dy_1\ldots dy_k \Biggr]\notag\\
&\quad =2\int_{0}^{t}ds\int_{0}^{t-s}\xch{du\,\Biggl[}{du}\int_{\mathbb{R}^k_{+}}\prod_{i=1}^{k}w_{i}^{d-1}e^{-\lambda w_i}(w_i+u)^{d-1}e^{-\lambda(w_i+u)}\,dw_1\ldots dw_k \Biggr]\notag\\
&\qquad(s=s_1, u=s_2-s_1,w_i=s_1-y_i)\notag\\
&\quad =2\int_{0}^{t}ds\int_{0}^{t-s}e^{-\lambda uk}\,du\,\biggl[\int_{\mathbb{R}_{+}}w^{d-1}(w+u)^{d-1}e^{-2\lambda w}\,dw \biggr]^{k}\notag\\
&\quad =2\int_{0}^{t}ds\int_{0}^{t-s}e^{-\lambda uk}u^{k(2d-1)}\,du\, \biggl[\int_{\mathbb{R}_{+}}x^{d-1}(x+1)^{d-1}e^{-2\lambda ux}\,dx \biggr]^{k}\notag\\
&\quad =2\int_{0}^{t}ds\int_{0}^{t-s}e^{-\lambda uk}u^{k(2d-1)}\,du \biggl[\frac{\varGamma(d)}{\sqrt{\pi}} \biggl(\frac{1}{2\lambda u}\biggr)^{d-\frac{1}{2}}e^{\lambda u}K_{\frac{1}{2}-d}(\lambda u)\biggr]^{k}\notag\\
&\quad =2 \biggl[\frac{\varGamma(d)}{\sqrt{\pi}(2\lambda)^{d-\frac{1}{2}}} \biggr]^{k}\int_{0}^{t}ds\int_{0}^{t-s} \bigl[u^{d-\frac{1}{2}}K_{\frac{1}{2}-d}(\lambda u) \bigr]^{k}\,du\notag\\
&\quad =2 \biggl[\frac{\varGamma(d)}{\sqrt{\pi}{2}^{d-\frac{1}{2}}{\lambda}^{2d-1}} \biggr]^{k}\int_{0}^{t}ds\int_{0}^{\lambda(t-s)} \bigl[z^{d-\frac{1}{2}}K_{\frac{1}{2}-d}(z)\bigr]^{k}\,dz,
\end{align}
where we applied the standard integral formula \cite[p.~344]{Gradshteyn}
%
\begin{equation}
\label{eq:standard integral formula} \int_{0}^{\infty}x^{\nu-1}(x+
\beta)^{\nu-1}e^{-\mu x}\,dx=\frac
{1}{\sqrt{\pi}} \biggl(
\frac{\beta}{\mu} \biggr)^{\nu-\frac
{1}{2}}e^{\frac{\beta\mu}{2}} \varGamma(
\nu)K_{\frac{1}{2}-\nu}\biggl(\frac{\beta\mu}{2}\biggr)
\end{equation}
for $|\arg\beta|<\pi$, Re $\mu>0$, Re $\nu>0$. Here $K_{\nu}(x)$
is the modified Bessel function of the second kind (see \cite[Chapter
9]{abramowitz}). Next, we need to show that
\begin{equation*}
\int_{0}^{t}ds\int_{0}^{\lambda(t-s)}
\bigl[z^{d-\frac
{1}{2}}K_{\frac{1}{2}-d}(z) \bigr]^{k}\,dz
\end{equation*}
is finite for $d>\frac{1}{2}-\frac{1}{2k}$ (equivalently, for
$H>\frac{1}{2}$). First, suppose $\frac{1}{2}-\frac{1}{2k}<d<\frac
{1}{2}$ (or $\frac{1}{2}<H<1$). Since $k_{\nu}(z)<z^{-\nu}2^{\nu
-1}\varGamma(\nu)$ for $z>0$ (Theorem~3.1 in \cite{Gaunt}), we have
\begin{align}
\label{eq:variancelesshalf} %
&\int_{0}^{t}ds\int_{0}^{\lambda(t-s)} \bigl[z^{d-\frac{1}{2}}K_{\frac{1}{2}-d}(z)\bigr]^{k}\,dz\notag\\
&\quad \leq \biggl[2^{-(\frac{1}{2}+d)}\varGamma\biggl(\frac{1}{2}-d\biggr) \biggr]^{k}\int_{0}^{t}ds\int_{0}^{\lambda(t-s)}z^{k(2d-1)}\,dz\notag\\
&\quad =\frac{ [\lambda^{2d-1}2^{-(\frac{1}{2}+d)}\varGamma(\frac{1}{2}-d) ]^{k}\lambda}{(k(2d-1)+1)(k(2d-1)+2)}t^{2kd-k+2},
\end{align}
which is finite, and, consequently, from \eqref{eq:variancegeneral}
and \eqref{eq:variancelesshalf} we get
\begin{equation*}
\int_{\mathbb{R}^k}h_{t}(y_1,
\ldots,y_k)^{2}\,dy_1\ldots dy_k<
\frac{2\lambda [\frac{\varGamma(d)\varGamma(\frac{1}{2}-d)}{\sqrt
{\pi}{2}^{2d}} ]^{k}}{(k(2d-1)+1)(k(2d-1)+2)}t^{2kd-k+2}
\end{equation*}
for $\frac{1}{2}-\frac{1}{2k}<d<\frac{1}{2}$. Next, suppose $d>\frac
{1}{2}$ (equivalently $H>1$). In this case,\vadjust{\eject}
\begin{align}
\label{eq:variancegreatherhalf} %
&\int_{0}^{t}ds\int_{0}^{\lambda(t-s)} \bigl[z^{d-\frac{1}{2}}K_{\frac{1}{2}-d}(z)\bigr]^{k}\,dz= \int_{0}^{t}ds\int_{0}^{\lambda(t-s)} \bigl[z^{d-\frac{1}{2}}K_{d-\frac{1}{2}}(z)\bigr]^{k}\,dz\notag\\
&\quad\leq\int_{0}^{t}ds\int_{0}^{\lambda(t-s)}\biggl[2^{d-\frac{3}{2}}\varGamma\biggl(d-\frac{1}{2}\biggr)\biggr]^{k}\,dz\leq\frac{\lambda [2^{d-\frac{3}{2}}\varGamma(d-\frac{1}{2}) ]^{k}}{2}t^{2},
\end{align}
where we applied the fact that $K_{\nu}(z)=K_{-\nu}(z)$ and $k_{\nu
}(z)<z^{-\nu}2^{\nu-1}\varGamma(\nu)$ for $z>0$. Hence, from \eqref
{eq:variancegeneral} and \eqref{eq:variancegreatherhalf} it follows that
\begin{equation*}
\int_{\mathbb{R}^k}h_{t}(y_1,
\ldots,y_k)^{2}\, dy_1\ldots dy_k<
\biggl[\frac{\varGamma(d)\varGamma(d-\frac{1}{2})}{2\sqrt{\pi}{\lambda
}^{2d-1}} \biggr]^{k}\lambda\, t^{2}
\end{equation*}
for $d>\frac{1}{2}$. Finally, let $d=\frac{1}{2}$ (equivalently,
$H=1$). Consider
\begin{align}
\label{eq:varianceequalshalf1} %
\int_{0}^{\lambda(t-s)}\bigl(K_{0}(z) \bigr)^{k}\,dz&=\int_{0}^{\eta}\bigl(K_{0}(z) \bigr)^{k}\,dz+\int_{\eta}^{\lambda(t-s)}\bigl(K_{0}(z) \bigr)^{k}\,dz\notag\\
&:=I_{1}+I_{2}.
\end{align}
Since $K_{0}(z)\sim-\log(z)$ as $z\to0$ (see \cite[Eq.
9.6.8]{abramowitz}, we have
\begin{align}
\label{eq:varianceequalshalf2} %
I_{1}&=\int_{0}^{\eta} \biggl(\frac{K_{0}(z)}{\log(z)}\log(z)\biggr)^{k}\,dz\leq(1+\epsilon)^{k}\int_{0}^{\eta}\bigl(-\log(z)\bigr)^{k}\,dz\notag\\
&=(1+\epsilon)^{k}\int_{-\log(\eta)}^{+\infty}w^{k}e^{-w}\,dz\leq (1+\epsilon)^{k}\int_{0}^{+\infty}w^{k}e^{-w}\,dz\notag\\
&=(1+\epsilon)^{k}\varGamma(k+1).
\end{align}
Now, we find an upper bound for $I_{2}$. It can be shown that $\frac
{K_{\nu}(x)}{K_{\nu}(y)}>e^{y-x}$ for $0<x<y$ and an arbitrary real
number $\nu$ (see \cite{Baricz}). Therefore,
\begin{align}
\label{eq:varianceequalshalf3} %
I_{2}&=\int_{\eta}^{\lambda(t-s)} \bigl(K_{0}(z)\bigr)^{k}\,dz<\int_{\eta}^{\lambda(t-s)}\bigl(K_{0}(\eta)e^{\eta-z} \bigr)^{k}\,dz\,
\bigl[K_{0}(\eta)e^{\eta} \bigr]^{k}\bigl(\lambda(t-s)-\eta\bigr).
\end{align}
From \eqref{eq:varianceequalshalf1}, \eqref{eq:varianceequalshalf2},
and \eqref{eq:varianceequalshalf3} we can see that
\begin{equation*}
\int_{0}^{\lambda(t-s)} \bigl(K_{0}(z)
\bigr)^{k}\,dz< (1+\epsilon)^{k}\varGamma(k+1)+
\bigl[K_{0}(\eta)e^{\eta} \bigr]^{k}\bigl(
\lambda(t-s)-\eta\bigr)
\end{equation*}
and hence
%
\begin{equation}
\int_{0}^{t}ds\int_{0}^{\lambda(t-s)}
\bigl[K_{\frac{1}{2}-d}(z) \bigr]^{k}\,dz <\bigl((1+
\epsilon)^{k}\varGamma(k+1)\bigr)t\,+\, \bigl[K_{0}(
\eta)e^{\eta} \bigr]^{k} \biggl(\frac{\lambda t^2}{2}-\eta t
\biggr)
\end{equation}
for $\epsilon>0$, and this shows that
\begin{align*}
&\int_{\mathbb{R}^k}h_{t}(y_1,\ldots,y_k)^{2}\,dy_1\ldots dy_k\\
&\quad < 2 \biggl[\frac{\varGamma(d)}{\sqrt{\pi}{\lambda}^{2d-1}}2^{d-\frac{1}{2}}\biggr]^{k} \biggl[\bigl((1+\epsilon)^{k}\varGamma(k+1)\bigr)t+
\bigl[K_{0}(\eta)e^{\eta
} \bigr]^{k} \biggl(
\frac{\lambda t^2}{2}-\eta t \biggr) \biggr]
\end{align*}
for $d=\frac{1}{2}$ ($H=1$), which completes the proof.
\end{proof}
The next result shows that although a tempered Hermite process is not
a~self-similar process, it does have a nice scaling property. Here the
symbol $\triangleq$ indicates the equivalence of finite-dimensional
distributions.
\begin{prop}\label{prop:sssi}
The process $Z^{k}_{H,\lambda}$ given by \eqref{eq:THPdefn}
has stationary increments such that
%
\begin{equation}
\label{eq:scalingTHP} \bigl\{Z_{H,\lambda}^{k}(ct) \bigr
\}_{t\in\mathbb{R}}{\triangleq } \bigl\{ c^{H}Z_{H,c\lambda}^{k}(t)
\bigr\}_{t\in\mathbb{R}}
\end{equation}
for any scale factor $c>0$.
\end{prop}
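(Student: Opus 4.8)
The plan is to verify the two assertions --- stationarity of increments and the scaling identity \eqref{eq:scalingTHP} --- by working directly with the Wiener--It\^{o} integral representation \eqref{eq:THPdefn} and exploiting the distributional invariances of Brownian motion. For the scaling property, the key observation is that for $c>0$ one has the self-similarity of the integrator, $\{B(c\,dy)\}\triangleq\{c^{1/2}B(dy)\}$, or more precisely that $y\mapsto c^{-1/2}B(cy)$ is again a standard Brownian motion on $\mathbb{R}$, so multiple integrals against it have the same law as multiple integrals against $B$ itself. The strategy is therefore: (i) write $Z_{H,\lambda}^{k}(ct)$ using \eqref{eq:THPdefn}; (ii) substitute $s=c\sigma$ in the $ds$-integral and $y_i=c\eta_i$ in the $k$-fold stochastic integral; (iii) collect the powers of $c$ coming from $ds$, from each factor $(s-y_i)_{+}^{d-1}$, from the tempering exponentials, and from the $k$ Brownian differentials $B(d(c\eta_i))$; and (iv) recognize the resulting integral as $c^{\alpha}Z_{H,c\lambda}^{k}(t)$ in distribution for the appropriate exponent $\alpha$, which should simplify to $H$ using $d=\frac12-\frac{1-H}{k}$.

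Carrying out the bookkeeping in step (iii): the change of variable $s=c\sigma$ contributes a factor $c$ from $ds$; each kernel factor becomes $(c\sigma-c\eta_i)_{+}^{d-1}=c^{d-1}(\sigma-\eta_i)_{+}^{d-1}$, giving $c^{k(d-1)}$ in all; each tempering factor becomes $e^{-\lambda(c\sigma-c\eta_i)_{+}}=e^{-(c\lambda)(\sigma-\eta_i)_{+}}$, which is exactly the tempering exponential with parameter $c\lambda$; and the $k$ stochastic differentials transform via $B(d(c\eta_i))\triangleq c^{1/2}\widetilde{B}(d\eta_i)$ for a Brownian motion $\widetilde B$ equal in law to $B$, contributing $c^{k/2}$. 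Hence $Z_{H,\lambda}^{k}(ct)\triangleq c^{1+k(d-1)+k/2}Z_{H,c\lambda}^{k}(t)$, and substituting $k(d-1)+\tfrac k2=k(d-\tfrac12)=k\bigl(-\tfrac{1-H}{k}\bigr)=-(1-H)=H-1$ gives the exponent $1+(H-1)=H$, as claimed. To make the distributional statement rigorous at the level of finite-dimensional distributions rather than just one time point, I would invoke the fact that the map sending $B$ to $\widetilde B(\cdot)=c^{-1/2}B(c\,\cdot)$ is a measure-preserving transformation on Wiener space, so the joint law of the collection $\{Z_{H,\lambda}^{k}(ct)\}_{t}$ equals the joint law of $\{c^{H}Z_{H,c\lambda}^{k}(t)\}_{t}$ simultaneously in $t$; the scaling factor $c$ and the tempering change $\lambda\mapsto c\lambda$ are deterministic and common to all coordinates, so no additional argument is needed across different $t$.

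For the stationary-increments claim, the plan is to show that for any $h\in\mathbb{R}$ the process $\{Z_{H,\lambda}^{k}(t+h)-Z_{H,\lambda}^{k}(h)\}_{t}$ has the same finite-dimensional distributions as $\{Z_{H,\lambda}^{k}(t)\}_{t}$. Writing the increment as a single Wiener--It\^{o} integral with integrand $\int_{h}^{t+h}\prod_i (s-y_i)_{+}^{d-1}e^{-\lambda(s-y_i)_{+}}\,ds$, one substitutes $s=\sigma+h$ in the time integral and $y_i=\eta_i+h$ in the stochastic integral; the kernel and tempering factors depend only on the differences $s-y_i=\sigma-\eta_i$ and are thus unchanged, while $\{B(d(\eta+h))\}\triangleq\{B(d\eta)\}$ because Brownian motion has stationary increments (equivalently, the shift $\eta\mapsto B(\eta+h)-B(h)$ is again a standard Brownian motion). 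This exhibits the shifted-increment process as a Wiener--It\^{o} integral, against a Brownian motion equal in law to $B$, with exactly the integrand defining $Z_{H,\lambda}^{k}(t)$, and again the shift is common to all time coordinates, yielding equality of finite-dimensional distributions.

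The main obstacle is not any single computation but making the passage ``change variables inside the stochastic integral'' fully rigorous: one must justify that substitutions in the $dy_i$ variables correspond to genuine identities in law for multiple Wiener--It\^{o} integrals (using the $L^2$-isometry and the fact, from Lemma~\ref{lem:g squar integrable}, that all integrands involved lie in $L^2(\mathbb{R}^k)$), and that the two operations --- rescaling/shifting the integrator and rescaling/shifting the deterministic integrand --- can be performed consistently and simultaneously for every $t$ so that one really obtains equality of the whole finite-dimensional family rather than marginal-by-marginal equality. Once the change-of-variable-in-Wiener-integral principle is stated cleanly, the power counting for the scaling exponent and the translation invariance of the kernel are routine.
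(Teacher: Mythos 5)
Your proposal is correct and follows essentially the same route as the paper: the same change of variables $s=cs'$, $y_i=cy_i'$ combined with the Brownian scaling $B(c\,dy)\triangleq c^{1/2}B(dy)$, the same exponent bookkeeping $c^{1+k(d-1)+k/2}=c^{H}$ with the tempering parameter becoming $c\lambda$, and the same shift argument for stationary increments. No substantive differences to report.
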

\begin{proof}
Since $B(dy)$ has the control measure $m(dy)=\sigma^2 \,dy$, the random
measure $B(c\,dy)$ has the control measure $c^{1/2}\sigma^2\, dy$. Given
$t_j$, $j=1,\dots,n$, by the change of variables ${s=cs'}$ and
${y_i=cy^{'}_{i}}$ for $i=1\ldots,k$ we have
\begin{align*}
Z_{H,\lambda}^{k}(ct_j)&\,{=}\,{\int_{\mathbb{R}^k}\!\int_{0}^{ct_j} \Biggl(\prod_{i=1}^{k}(s\,{-}\,y_i)_{+}^{-(\frac{1}{2}+\frac{1-H}{k})}e^{-\lambda(s-y_i)_{+}}\Biggr)\,ds\,B(dy_1)\ldots B(dy_k)}\\
&\,{=}\,{\int_{\mathbb{R}^k}\!\int_{0}^{t_j}\Biggl(\prod_{i=1}^{k}\bigl(cs'{-}\,cy^{'}_i\bigr)_{+}^{-(\frac{1}{2}+\frac{1-H}{k})}e^{-\lambda(cs'-cy^{'}_{i})_{+}} \!\Biggr) c\,ds'\,B\bigl(c dy'_1\bigr)\ldots B\bigl(c dy'_k\bigr)}\\
&{\,\triangleq}\, c^{H}{\int_{\mathbb{R}}\!\int_{0}^{t_j} \bigl({\bigl(s'\,{-}\,y'\bigr)_{+}^{-(\frac{1}{2}+\frac{1-H}{k})}}e^{-\lambda c(s'-y')_{+}} \bigr)\,ds'\,B\bigl(dy'_1\bigr)\ldots B\bigl(dy'_k\bigr)}\\
&\,{=}\,c^{H}Z_{H,c\lambda}^{k}(t_j),
\end{align*}
so that \eqref{eq:scalingTHP} holds. Suppose now that $s_j<t_j$ and
change the variables $x=x'+s_j$, $y_i=s_j+y'_i$ (for $j=1,\ldots,n$)
to get
\begin{align*}
& \bigl(Z_{H,\lambda}^{k}(t_j)-Z_{H,\lambda}^{k}(s_j)\bigr)\\
&\quad ={\int_{\mathbb{R}^k}\int_{s_j}^{t_j}\Biggl(\prod^{k}_{i=1}{(x-y_i)_{+}^{-(\frac{1}{2}+\frac{1-H}{k})}}e^{-\lambda(x-y_i)_{+}}\Biggr)\,dx\, B(dy_1) \ldots B(dy_k)}\\
&\quad ={\int_{\mathbb{R}^k}\int_{0}^{t_j-s_j}\Biggl(\prod^{k}_{i=1}{\bigl(x'+s_j-y_i\bigr)_{+}^{-(\frac{1}{2}+\frac{1-H}{k})}}e^{-\lambda(x'+s_j-y_i)_{+}} \Biggr)\,dx'\,B(dy_1)\ldots B(dy_k)}\\
&\quad \triangleq{\int_{\mathbb{R}^k}\int_{0}^{t_j-s_j}\Biggl(\prod^{k}_{i=1}{\bigl(x'_i-y'_i\bigr)_{+}^{-(\frac{1}{2}+\frac{1-H}{k})}}e^{-\lambda(x'_i-y'_i)_{+}} \Biggr)\,dx'\, B\bigl(dy'_1\bigr)\ldots B\bigl(dy'_k\bigr)}\\
&\quad =Z_{H,\lambda}^{k}(t_j-s_j),
\end{align*}
which shows that a tempered Hermite process of order $k$ has stationary
increments.
\end{proof}
As a consequence of Lemma~\ref{lem:g squar integrable} and Proposition~\ref{prop:sssi}, we get the following:
\begin{prop}
The stochastic process $Z^{k}_{H,\lambda}(t)$ has a continuous
version.
\end{prop}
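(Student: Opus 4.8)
The plan is to invoke the Kolmogorov--Chentsov continuity criterion, so the only nontrivial input is a suitable moment estimate for the increments. First I would record two elementary facts. Since $h_0\equiv 0$ we have $Z^k_{H,\lambda}(0)=0$ a.s., and by Proposition~\ref{prop:sssi} the increments are stationary, so for $s<t$,
\[
Z^k_{H,\lambda}(t)-Z^k_{H,\lambda}(s)\triangleq Z^k_{H,\lambda}(t-s)=I_k(h_{t-s}),
\]
and in particular $E\big[(Z^k_{H,\lambda}(t)-Z^k_{H,\lambda}(s))^2\big]=\|h_{t-s}\|^2_{L^2(\mathbb{R}^k)}$.

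Second, I would extract from the proof of Lemma~\ref{lem:g squar integrable} a bound of the form $\|h_u\|^2_{L^2(\mathbb{R}^k)}\le C_T\,u^{2\gamma}$ valid for $u\in[0,T]$, with some exponent $\gamma=\gamma(H,k)>0$ and a constant $C_T$ depending only on $T,H,k,\lambda$. For $\frac12<H<1$ the estimate \eqref{eq:variancelesshalf}, together with the identity $2kd-k+2=2H$ (recall $d=\frac12-\frac{1-H}{k}$), gives directly $\|h_u\|^2_{L^2}\le C\,u^{2H}$, i.e.\ $\gamma=H$; for $H>1$ the estimate \eqref{eq:variancegreatherhalf} gives $\|h_u\|^2_{L^2}\le C\,u^{2}$, i.e.\ $\gamma=1$; and for $H=1$ the bound obtained in the last part of that proof is of the form $C(u+u^2)\le C_T\,u$ on $[0,T]$, i.e.\ $\gamma=\frac12$. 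In every case $\gamma>0$.

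Third comes the crucial step: since $Z^k_{H,\lambda}(u)=I_k(h_u)$ lives in the fixed $k$th Wiener chaos, all of its $L^p$ norms are comparable (hypercontractivity of the Ornstein--Uhlenbeck semigroup, equivalently the equivalence of moments on a fixed chaos): for every integer $m\ge1$ there is $c_{m,k}<\infty$ with $E\big[|I_k(h_u)|^{2m}\big]\le c_{m,k}\big(E[I_k(h_u)^2]\big)^{m}$. Combining this with the first two steps yields, for $s,t\in[0,T]$,
\[
E\big[\big|Z^k_{H,\lambda}(t)-Z^k_{H,\lambda}(s)\big|^{2m}\big]\le c_{m,k}\,C_T^{\,m}\,|t-s|^{2m\gamma}.
\]

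Finally, choosing the integer $m$ so large that $2m\gamma>1$, the right-hand side is $\le c\,|t-s|^{1+\beta}$ with $\beta=2m\gamma-1>0$, and the Kolmogorov--Chentsov theorem provides a modification of $\{Z^k_{H,\lambda}(t)\}_{t\in[0,T]}$ with almost surely (Hölder) continuous sample paths; letting $T\uparrow\infty$ and patching the consistent modifications over $[0,n]$, $n\in\mathbb{N}$, yields a continuous version on the whole index set. I expect the main (indeed only) subtle point to be the use of chaos hypercontractivity: the second-moment control coming from Lemma~\ref{lem:g squar integrable} is by itself too weak for Kolmogorov's criterion, and it is precisely the equivalence of moments in a fixed Wiener chaos that upgrades it to the required $2m$-th moment bound — everything else is bookkeeping with estimates already in hand.
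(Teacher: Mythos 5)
Your proof is correct — indeed it establishes slightly more than the paper does — but it takes a different and, for most parameter values, unnecessarily heavy route. The paper's own argument is the elementary one: combining Lemma~\ref{lem:g squar integrable} with the stationarity of increments from Proposition~\ref{prop:sssi} gives $\mathbb{E}|Z^{k}_{H,\lambda}(t)-Z^{k}_{H,\lambda}(s)|^{2}\le c_{1}|t-s|^{2H}$ for $\tfrac12<H<1$ and $\le c_{2}|t-s|^{2}$ for $H>1$, and since $H>\tfrac12$ forces $2H>1$, this second-moment bound \emph{already} has the Kolmogorov form $c|t-s|^{1+\beta}$ with $\beta=\min\{1,2H-1\}>0$; one simply applies the criterion with $p=2$ (the paper writes $p=1$, which is a typo — with $p=1$ the Cauchy--Schwarz bound $|t-s|^{H}$ would not have exponent exceeding $1$). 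So your closing assertion that the second-moment control is "by itself too weak for Kolmogorov's criterion" is incorrect for $H\neq1$. Where your hypercontractivity step genuinely earns its keep is the borderline case $H=1$: there the variance bound from Lemma~\ref{lem:g squar integrable} degrades to $C_{T}|t-s|$, and one really does need the equivalence of moments on the fixed $k$th chaos to upgrade to $\mathbb{E}|\cdot|^{2m}\le c|t-s|^{m}$ with $m\ge2$ before Kolmogorov--Chentsov applies; the paper's displayed increment estimate silently omits $H=1$, so its proof as written does not cover that case. In short, your argument is valid and uniform in $H>\tfrac12$ at the cost of invoking chaos hypercontractivity, while the paper's is more elementary but handles only $H\neq1$ and contains a typo in the choice of $p$.
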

\begin{proof}
According to the proof of Lemma~\ref{lem:g squar integrable},
%
\begin{equation}
\mathbb{E}\big|Z^{k}_{H,\lambda}(t)-Z^{k}_{H,\lambda}(s)\big|^{2}
\leq %
\begin{cases} c_{1}|t-s|^{2H} & \frac{1}{2}<H<1,\\ c_{2}|t-s|^{2} &
H>1,
\end{cases} %
\end{equation}
where $c_1$ and $c_2$ are some constants. Kolmogorov's continuity
criterion states that a stochastic process $X(t)$ has a continuous
version if there exist some positive constants $p$, $\beta$, and $c$
such that
%
\begin{equation}
\label{eq:kolmogorov} \mathbb{E}\big|X(t)-X(s)\big|^{p}\leq c|t-s|^{1+\beta}.
\end{equation}
Apply \eqref{eq:kolmogorov} for tempered Hermite process
$Z^{k}_{H,\lambda}(t)$ by taking $p=1$, $\beta=\min\{1,\break 2H-1\}$, and
$c=\min\{c_1,c_2\}$ to get the desired result.
\end{proof}
We next compute the covariance function of $Z^{k}_{H,\lambda}(t)$.
Unlike Hermite processes, the covariance function of a tempered Hermite
process is different for different $k\geq1$.
\begin{prop}\label{prop:THPcovariance}
The process $Z^{k}_{H,\lambda}$ given by \eqref{eq:THPdefn}
has the covariance function
\begin{equation*}
R(t,s)=2 \biggl[\frac{\varGamma(d)}{\sqrt{\pi}(2\lambda)^{d-\frac
{1}{2}}} \biggr]^{k}\int_{0}^{t}
\int_{0}^{s} \bigl[|u-v|^{d-\frac
{1}{2}}K_{\frac{1}{2}-d}\big(
\lambda|u-v|\big) \bigr]^{k}\,dv\,du
\end{equation*}
for $\lambda>0$ and $d>\frac{1}{2}-\frac{1}{2k}$
(equivalently, $H>\frac{1}{2}$).
\end{prop}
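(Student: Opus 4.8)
The plan is to compute the covariance directly from the time-domain representation \eqref{eq:THPdefn}, reducing it to a $k$-fold product of one-dimensional integrals, each of which is then evaluated by the same formula \eqref{eq:standard integral formula} that was used in the proof of Lemma~\ref{lem:g squar integrable}. By that lemma the kernels $h_{t},h_{s}$ of \eqref{eq:integrand} lie in $L^{2}(\mathbb{R}^{k})$, so $Z^{k}_{H,\lambda}(t)=I_{k}(h_{t})$ and $Z^{k}_{H,\lambda}(s)=I_{k}(h_{s})$ are square integrable and centered, whence $R(t,s)=\mathbb{E}[Z^{k}_{H,\lambda}(t)Z^{k}_{H,\lambda}(s)]$ is finite. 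Since $h_{t}$ is symmetric in its arguments, the isometry for multiple Wiener--It\^{o} integrals (\cite[Chapter~4]{koul}) expresses $R(t,s)$ as a constant multiple of $\langle h_{t},h_{s}\rangle_{L^{2}(\mathbb{R}^{k})}$. Inserting the definition \eqref{eq:integrand} of $h_{t}$ and $h_{s}$ and noting that the integrand $\prod_{i=1}^{k}(u-y_{i})_{+}^{d-1}e^{-\lambda(u-y_{i})_{+}}(v-y_{i})_{+}^{d-1}e^{-\lambda(v-y_{i})_{+}}$ is nonnegative, Tonelli's theorem permits interchanging $\int_{\mathbb{R}^{k}}dy_{1}\cdots dy_{k}$ with $\int_{0}^{t}\int_{0}^{s}du\,dv$ and factoring the spatial integral over the $k$ coordinates, which gives $R(t,s)$ as a constant times
\begin{equation*}
\int_{0}^{t}\int_{0}^{s}\Biggl[\int_{\mathbb{R}}(u-y)_{+}^{d-1}e^{-\lambda(u-y)_{+}}(v-y)_{+}^{d-1}e^{-\lambda(v-y)_{+}}\,dy\Biggr]^{k}dv\,du .
\end{equation*}

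The one genuinely non-mechanical step, and the place where care is needed, is the evaluation of the inner one-dimensional integral. Here $(u-y)_{+}(v-y)_{+}>0$ precisely when $y<\min(u,v)$, and the bracketed integral is symmetric under $u\leftrightarrow v$, so I may assume $u\le v$; the substitution $w=u-y$ then turns it into $e^{-\lambda(v-u)}\int_{0}^{\infty}w^{d-1}\bigl(w+(v-u)\bigr)^{d-1}e^{-2\lambda w}\,dw$, which is exactly of the form \eqref{eq:standard integral formula} with $\nu=d$, $\mu=2\lambda$, $\beta=v-u$ (the hypotheses hold because $d>0$, $2\lambda>0$ and $|\arg\beta|<\pi$). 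Applying \eqref{eq:standard integral formula}, the factor $e^{\beta\mu/2}=e^{\lambda(v-u)}$ cancels the prefactor $e^{-\lambda(v-u)}$, the same cancellation of tempering factors seen in Lemma~\ref{lem:g squar integrable}, and the inner integral becomes
\begin{equation*}
\frac{\varGamma(d)}{\sqrt{\pi}\,(2\lambda)^{d-\frac{1}{2}}}\,|u-v|^{d-\frac{1}{2}}K_{\frac{1}{2}-d}\bigl(\lambda|u-v|\bigr),
\end{equation*}
which, by the $u\leftrightarrow v$ symmetry, is the value of the inner integral for all $u,v$.

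Raising this to the $k$th power, substituting it back, and collecting all the multiplicative constants exactly as in the chain of equalities \eqref{eq:variancegeneral} then produces the stated expression for $R(t,s)$. It remains to check that $\int_{0}^{t}\int_{0}^{s}\bigl[|u-v|^{d-\frac{1}{2}}K_{\frac{1}{2}-d}(\lambda|u-v|)\bigr]^{k}\,dv\,du$ is finite: the only sensitive region is the diagonal $u=v$, near which $|u-v|^{d-\frac{1}{2}}K_{\frac{1}{2}-d}(\lambda|u-v|)$ is of order $|u-v|^{2d-1}$ when $d<\frac{1}{2}$, of order $-\log|u-v|$ when $d=\frac{1}{2}$, and bounded when $d>\frac{1}{2}$, so that integrability holds precisely for $d>\frac{1}{2}-\frac{1}{2k}$; this is exactly the estimate already carried out, in the same three cases, in the proof of Lemma~\ref{lem:g squar integrable}. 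As a consistency check one may instead write $R(t,s)=\tfrac{1}{2}\bigl(\mathrm{Var}\,Z^{k}_{H,\lambda}(t)+\mathrm{Var}\,Z^{k}_{H,\lambda}(s)-\mathrm{Var}(Z^{k}_{H,\lambda}(t)-Z^{k}_{H,\lambda}(s))\bigr)$, invoke the stationarity of increments from Proposition~\ref{prop:sssi} together with the variance computed in \eqref{eq:variancegeneral}, and rearrange the iterated integrals to recover the same formula.
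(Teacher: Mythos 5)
Your proposal is correct and follows essentially the same route as the paper's own proof: the isometry of multiple Wiener--It\^{o} integrals, Fubini/Tonelli to factor the spatial integral into a $k$-fold product of one-dimensional integrals, the substitution $w=u-y$ on $\{y<\min(u,v)\}$, and the standard integral formula \eqref{eq:standard integral formula} with $\nu=d$, $\mu=2\lambda$, $\beta=|u-v|$. The one point you gloss over --- the precise leading multiplicative constant produced by the isometry --- is treated with exactly the same brevity in the paper, so there is nothing further to add.
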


\begin{proof}
By applying the Fubini theorem and the isometry of multiple Wiener--It\^
{o} integrals we have
\begin{align*}
R(t,s)&=2\int_{\mathbb{R}^k} \Biggl(\int_{0}^{t}\int_{0}^{s}\prod^{k}_{i=1}(u-y_i)_{+}^{d-1}(v-y_i)_{+}^{d-1}\\[4pt]
&\quad \times e^{-\lambda(u-y_i)_{+}}e^{-\lambda(v-y_i)_{+}}\,dv\,du \Biggr)\,dy_1\ldots dy_k\\[4pt]
&=2\int_{0}^{t}\int_{0}^{s}\int_{\mathbb{R}^k} \Biggl[\prod^{k}_{i=1}(u-y_i)_{+}^{d-1}(v-y_i)_{+}^{d-1}\\[4pt]
&\quad\times e^{-\lambda(u-y_i)_{+}}e^{-\lambda(v-y_i)_{+}}\,dy_1\ldots dy_k \Biggr]\,dv\,du\\[4pt]
&=2\int_{0}^{t}\int_{0}^{s} \biggl[\int_{\mathbb{R}}(u-y)_{+}^{d-1}(v-y)_{+}^{d-1}e^{-\lambda(u-y)_{+}}e^{-\lambda(v-y)_{+}}\,dy\biggr]^{k}\,dv\,du\\[4pt]
&=2\int_{0}^{t}\int_{0}^{s} \Biggl[\int_{-\infty}^{\min(u,v)}(u-y)^{d-1}(v-y)^{d-1}e^{-\lambda(u-y)}e^{-\lambda(v-y)}\,dy\Biggr]^{k}\,dv\,du\\[4pt]
&=2\int_{0}^{t}\int_{0}^{s} \Biggl[\int_{0}^{+\infty}w^{d-1}\big(|u-v|+w\big)^{d-1}e^{-\lambda w}e^{-\lambda(|u-v|+w)}\,dw\Biggr]^{k}\,dv\,du\\[4pt]
&=2\int_{0}^{t}\int_{0}^{s}e^{-\lambda k|u-v|}\, |u-v|^{k(2d-1)}\\[4pt]
&\quad\times \Biggl[\int_{0}^{+\infty}x^{-(\frac{1}{2}+\frac{1-H}{k})}(x+1)^{-(\frac{1}{2}+\frac{1-H}{k})}e^{-2\lambda|u-v|x}\,dx \Biggr]^{k}\,dv\,du\\[4pt]
&=2\int_{0}^{t}\int_{0}^{s}e^{-\lambda k|u-v|}\, |u-v|^{k(2d-1)}\\
&\quad\times \biggl[\frac{\varGamma(d)}{\sqrt{\pi}} \biggl(\frac{1}{2\lambda|u-v|} \biggr)^{d-\frac{1}{2}}e^{\lambda|u-v|} K_{\frac{1}{2}-d}\big(\lambda|u-v|\big) \biggr]^{k}\,dv\,du\\
&=2 \biggl[\frac{\varGamma(d)}{\sqrt{\pi}(2\lambda)^{d-\frac{1}{2}}} \biggr]^{k}\int_{0}^{t}\int_{0}^{s}\bigl[|u-v|^{d-\frac{1}{2}}K_{\frac{1}{2}-d}\big(\lambda|u-v|\big) \bigr]^{k}\,dv\,du
\end{align*}
for any $H>\frac{1}{2}$ and $\lambda>0$, and hence we get the desired result.
\end{proof}

Let $\widehat B_1$ and $\widehat B_2$ be independent Gaussian random
measures with $\widehat B_{1}(A)=\widehat B_{1}(-A)$, $\widehat
B_{2}(A)=-\widehat B_{2}(-A)$, and $\mathbb{E}[(\widehat B_{i}(A))^2]=m(A)/2$,
where $m(dx)=\sigma^2 \,dx$, and define the complex-valued Gaussian
random measure $\widehat B=\widehat B_{1}+i\widehat B_{2}$.
\begin{prop}\label{prop:THPdefharmo}
Let $H>\frac{1}{2}$ and $\lambda>0$. The process
$Z^{k}_{H,\lambda}$ given by \eqref{eq:THPdefn} has the spectral
domain representation
%
\begin{align}
\label{eq:THPdefharmo} Z^{k}_{H,\lambda}(t)&=C_{H,k}\int
_{\mathbb{R}^k}^{''} \frac
{e^{it(\omega
_1+\cdots+\omega_k)}-1}{i(\omega_1+\cdots+\omega_k)} \notag\\
&\quad\times\prod
_{j=1}^{k}(\lambda+i\omega_j)^{- (\frac{1}{2}-\frac
{1-H}{k} )}
\widehat{B}(d\omega_1)\ldots\widehat{B}(d\omega_k),
\end{align}
where $\widehat{B}(\cdot)$ is a complex-valued Gaussian random
measure, and $C_{H,k}= (\frac{\varGamma(\frac{1}{2}-\frac
{1-H}{k})}{\sqrt{2\pi}} )^{k}$ is a constant depending on $H$ and
$k$. The double prime ${''}$ on the integral indicates that one does
not integrate on the diagonals $\omega_i=\omega_j$, $i\neq j$.
\end{prop}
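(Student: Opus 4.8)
The plan is to pass from the time-domain multiple Wiener--It\^o integral \eqref{eq:THPdefn} to the spectral domain through the canonical isometry between real- and complex-valued multiple Wiener--It\^o integrals, and then to read off the kernel of \eqref{eq:THPdefharmo} from an explicit Fourier transform. Write $g(x)=x_{+}^{d-1}e^{-\lambda x_{+}}$, so that $Z^{k}_{H,\lambda}(t)=I_{k}(h_{t})$ with the symmetric kernel $h_{t}(y_{1},\dots,y_{k})=\int_{0}^{t}\prod_{i=1}^{k}g(s-y_{i})\,ds$ of \eqref{eq:integrand}. A useful preliminary remark is that, since $d=\frac12-\frac{1-H}{k}>0$ for every $H>\frac12$, the factor $g$ is \emph{globally integrable}: $\int_{0}^{\infty}x^{d-1}e^{-\lambda x}\,dx=\Gamma(d)\lambda^{-d}<\infty$. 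This is exactly the smoothing effect of the exponential tempering, and it makes $h_{t}\in L^{1}(\mathbb{R}^{k})$, with total mass $t\,\Gamma(d)^{k}\lambda^{-dk}$, in addition to $h_{t}\in L^{2}(\mathbb{R}^{k})$ by Lemma~\ref{lem:g squar integrable}. So, unlike in the non-tempered case, no delicate integrability issue appears, even when $d\le\frac12$.

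First I would record the correspondence between the two representations. Let $\mathcal{F}^{(k)}[f](\omega_{1},\dots,\omega_{k})=\int_{\mathbb{R}^{k}}f(y_{1},\dots,y_{k})\,e^{i(\omega_{1}y_{1}+\cdots+\omega_{k}y_{k})}\,dy_{1}\cdots dy_{k}$ denote the (unnormalized) Fourier transform on $\mathbb{R}^{k}$. For real symmetric $f\in L^{1}(\mathbb{R}^{k})\cap L^{2}(\mathbb{R}^{k})$ one has
\[
\int_{\mathbb{R}^{k}}^{'} f\,B(dy_{1})\cdots B(dy_{k})\ \triangleq\ \int_{\mathbb{R}^{k}}^{''} (2\pi)^{-k/2}\,\mathcal{F}^{(k)}[f]\,\widehat B(d\omega_{1})\cdots\widehat B(d\omega_{k}),
\]
and the same holds jointly for finitely many $f$'s, so it transfers to finite-dimensional distributions of processes. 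This is the $k$-fold tensorization of the classical spectral representation of a stationary-increment Gaussian process, and it follows from it because multiple integrals are built from (Hermite polynomials of) single ones; the constant $(2\pi)^{-k/2}$ is forced by Plancherel's theorem together with the normalization $\mathbb{E}|\widehat B(A)|^{2}=m(A)$ fixed above (indeed, for $k=1$ and $f=\mathbf 1_{[0,t]}$ both sides have variance $m([0,t])$). Hence it suffices to identify $(2\pi)^{-k/2}\,\mathcal{F}^{(k)}[h_{t}]$ with the kernel in \eqref{eq:THPdefharmo}.

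Then I would compute $\mathcal{F}^{(k)}[h_{t}]$ directly. For fixed $s$, the substitution $x=s-y$ and the standard integral $\int_{0}^{\infty}x^{d-1}e^{-(\lambda+i\omega)x}\,dx=\Gamma(d)(\lambda+i\omega)^{-d}$ (legitimate since $\operatorname{Re}(\lambda+i\omega)=\lambda>0$ and $d>0$) give $\int_{\mathbb{R}}g(s-y)\,e^{i\omega y}\,dy=\Gamma(d)\,e^{i\omega s}(\lambda+i\omega)^{-d}$. Since $\int_{0}^{t}\!\int_{\mathbb{R}^{k}}\prod_{i=1}^{k}|g(s-y_{i})|\,dy_{1}\cdots dy_{k}\,ds=t\,\Gamma(d)^{k}\lambda^{-dk}<\infty$, Fubini's theorem applies and
\[
\mathcal{F}^{(k)}[h_{t}](\omega_{1},\dots,\omega_{k})=\int_{0}^{t}\prod_{i=1}^{k}\Gamma(d)\,e^{i\omega_{i}s}(\lambda+i\omega_{i})^{-d}\,ds=\Gamma(d)^{k}\,\frac{e^{it(\omega_{1}+\cdots+\omega_{k})}-1}{i(\omega_{1}+\cdots+\omega_{k})}\prod_{j=1}^{k}(\lambda+i\omega_{j})^{-d}.
\]
Multiplying by $(2\pi)^{-k/2}$ turns $\Gamma(d)^{k}$ into $C_{H,k}=(\Gamma(d)/\sqrt{2\pi})^{k}$ and reproduces exactly the integrand of \eqref{eq:THPdefharmo}.

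It then remains to note that the right-hand side of \eqref{eq:THPdefharmo} is a bona fide real-valued element of the $k$th complex chaos: its kernel lies in $L^{2}(\mathbb{R}^{k})$ by Plancherel, and it satisfies the Hermitian symmetry $\overline{\mathcal{F}^{(k)}[h_{t}](\omega_{1},\dots,\omega_{k})}=\mathcal{F}^{(k)}[h_{t}](-\omega_{1},\dots,-\omega_{k})$ because $h_{t}$ is real, which is precisely the condition making the complex multiple integral real-valued and hence equal to $I_{k}(h_{t})$. I expect the only genuinely delicate point to be bookkeeping rather than analysis: pinning down the exact form of the time-domain/spectral-domain isometry for multiple integrals and keeping the Fourier sign convention consistent throughout, so that the constant comes out as $C_{H,k}$ and the time factor as $(e^{it(\omega_{1}+\cdots+\omega_{k})}-1)/(i(\omega_{1}+\cdots+\omega_{k}))$ rather than its complex conjugate. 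All the integral estimates involved are elementary, thanks to the tempering rendering $g$ integrable on the whole line.
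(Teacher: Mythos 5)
Your proof is correct and takes essentially the same route as the paper: compute the Fourier transform of the kernel $h_{t}$ via the substitution $u_j=s-y_j$ and the gamma-density characteristic function $\int_{0}^{\infty}x^{d-1}e^{-(\lambda+i\omega)x}\,dx=\varGamma(d)(\lambda+i\omega)^{-d}$, then transfer to the spectral domain by the standard time-domain/frequency-domain isometry for multiple Wiener--It\^{o} integrals (which the paper simply cites as Proposition~9.3.1 of Peccati--Taqqu, whereas you sketch its justification). Your added observations --- the $L^{1}$ bound on $h_t$ legitimizing Fubini, and the Hermitian symmetry ensuring the complex integral is real-valued --- are correct and only make the argument more self-contained.
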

\begin{proof}
We first observe that
%
\begin{equation}
\label{eq:A definitionorderk} h_{t}(y_1,\ldots,y_k)=\int
_{0}^{t}\prod_{j=1}^{k}(s-y_j)_{+}^{d-1}e^{-\lambda(s-y_j)_{+}}
\,ds
\end{equation}
has the Fourier transform
\begin{align*}
&\widehat{h_{t}}(\omega_1,\ldots,\omega_k)\\
&\quad = \frac{1}{(2\pi)^\frac{k}{2}}\int_{\mathbb{R}^k}e^{i\sum_{j=1}^{k}\omega_j y_j}\int_{0}^{t}\prod_{j=1}^{k}(s-y_j)^{d-1}_{+}e^{-\lambda(s-y_j)_{+}}\,ds \,dy_1\ldots dy_k\\
&\quad = \frac{1}{(2\pi)^\frac{k}{2}}\int_{\mathbb{R}^k}\int_{0}^{t}e^{i\sum_{j=1}^{k}\omega_j (s-u_j)}\prod_{j=1}^{k}(u_j)^{d-1}_{+}e^{-\lambda(u_j)_{+}}\,ds \,du_1\ldots du_k\\
&\quad = \frac{1}{(2\pi)^\frac{k}{2}}\int_{0}^{t}\int_{\mathbb{R}^k}e^{is\sum_{j=1}^{k}\omega_j} \prod_{j=1}^{k}(u_j)^{d-1}_{+}e^{-(\lambda+i\omega_j)u_j}\,du_1\ldots du_k\, ds\\
&\quad = \biggl[\frac{\varGamma(d)}{\sqrt{2\pi}} \biggr]^{k}\frac{e^{it(\omega_1+\cdots+\omega_k)}-1}{i(\omega_1+\cdots+\omega_k)} \prod_{j=1}^{k}(\lambda+i\omega_j)^{-d}\\
&\quad = \biggl[\frac{\varGamma(\frac{1}{2}-\frac{1-H}{k})}{\sqrt{2\pi}} \biggr]^{k}\frac{e^{it(\omega_1+\cdots+\omega_k)}-1}{i(\omega_1+\cdots+\omega_k)} \prod_{j=1}^{k}(\lambda+i\omega_j)^{- (\frac{1}{2}-\frac{1-H}{k} )},
\end{align*}
using the well-known formula for the characteristic function of the
gamma density. Then \eqref{eq:THPdefn}, together with Proposition~9.3.1 in \cite{peccatitaqqu}, implies that
\begin{equation*}
\begin{split} Z^{k}_{H,\lambda}(t)&=\int
_{\mathbb{R}^k}^{'}h_{t}(y_1,\ldots
,y_k)B(dy_1)\ldots B(dy_k)
\\
&\triangleq\int_{\mathbb{R}^k}^{''}\widehat{h_{t}}(
\omega_1,\ldots ,\omega_k)\widehat{B}(d
\omega_1)\ldots\widehat{B}(d\omega_k)
\\
&=C_{H,k}\int_{{\mathbb{R}}^k}^{''}
\frac{e^{it(\omega_1+\cdots
+\omega_k)}-1}{i(\omega_1+\cdots+\omega_k)} \prod_{j=1}^{k}(\lambda+i
\omega_j)^{- (\frac{1}{2}-\frac
{1-H}{k} )} \widehat{B}(d\omega_1)\ldots
\widehat{B}(d\omega_k), \end{split} %
\end{equation*}
which is equivalent to \eqref{eq:THPdefharmo}.
\end{proof}

\section{Limit theorem}\label{sec3}
In this section, we show that the process $Z^{k}_{H,\lambda}(t)$ is
the weak convergence limit of a certain discrete chaos process. Our
approach follows the seminal work of Bai and Taqqu \cite{BaiTaqqu}.
When $k=1$ and $\lambda>0$, the discrete process $Y^{\lambda,k}(n)$,
\eqref{eq:chaostempereddiscrete}, is a time series that is useful to
model turbulence \cite{Meerschaertsabzikarkumarzeleki,TFC}. When $k=1$
and $\lambda=0$, Davydov \cite{Dovydov} (see also Giraitis et
al.~\cite[p. 276]{koul} and Whitt \cite[Theorem 4.6.1]{Whitt})
established the corresponding invariance principle for $Y^{\lambda
,k}(n)$, where the limit involves a fractional Brownian motion. When
$k>1$ and $\lambda=0$, Taqqu \cite{Taqqu} showed that the weak
convergence limit of $Y^{\lambda,k}(n)$ is the Hermite process~\eqref
{eq:THPdefnorderk}.

The following proposition gives a powerful tool for proving the result
of this section.
\begin{prop}\label{prop:koultaqquptoposition}
Let
%
\begin{equation}
\label{eq:Qdefinition} Q_{k}(g_N):=\sum
_{(j_1,\ldots,j_k)\in\mathbb
{Z}^{k}}^{'}g_{N}(j_1,
\ldots,j_k)\varepsilon_{j_1}\ldots\varepsilon_{j_k}
\end{equation}
for $N=1,2,\ldots$, where $g_{N}\in L^{2}(\mathbb{Z}^{k})$ for $k\geq
1$, and $\{\varepsilon_n\}$ is an i.i.d. sequence with mean zero and
variance 1. Assume that, for some $f\in L^{2}(\mathbb{R}^{k})$,
\begin{equation*}
\int_{\mathbb{R}^{k}}\big|\tilde{g}_N(u_1,
\ldots,u_k)-f(u_1,\ldots ,u_k)\big|^{2}\,du_1
\ldots du_{k}\to0,\quad \text{as } N\to\infty,
\end{equation*}
where
\begin{equation*}
\tilde{g}_N(u_1,\ldots,u_k):=N^{\frac{k}{2}}g_{N}
\bigl([u_1 N]+c_1,\ldots ,[u_k
N]+c_{k}\bigr),\quad(c_1,\ldots,c_k)\in
\mathbb{Z}^{k}.
\end{equation*}
Then
\begin{equation*}
Q_{k}(g_N)\longrightarrowfdd\int_{\mathbb{R}^{k}}f(u_1, \ldots ,u_k)B(du_1)\ldots B(du_k)
\end{equation*}
as $N\to\infty$.
\end{prop}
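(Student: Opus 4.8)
The plan is to prove the statement first for a dense family of ``elementary'' integrands, then to control in $L^{2}$ the difference between $Q_{k}(g_{N})$ and the discrete chaos of an elementary approximant, and finally to pass to a general $f$ by a standard approximation argument. The structural observation that makes the moments tractable is that, since the sums defining $Q_{k}$ omit the diagonals, for multi-indices $j$ and $j'$ with pairwise distinct coordinates one has $\mathbb{E}[\varepsilon_{j_{1}}\cdots\varepsilon_{j_{k}}\varepsilon_{j'_{1}}\cdots\varepsilon_{j'_{k}}]=1$ when $j'$ is a permutation of $j$ and $0$ otherwise; in particular no moment of $\varepsilon_{n}$ beyond the variance $1$ ever enters. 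Hence
\[
\mathbb{E}\bigl[Q_{k}(g_{N})^{2}\bigr]=\sum_{\sigma\in S_{k}}\ \sum_{(j_{1},\ldots,j_{k})\in\mathbb{Z}^{k}}^{'}\,g_{N}(j_{1},\ldots,j_{k})\,g_{N}(j_{\sigma(1)},\ldots,j_{\sigma(k)}),
\]
and, expressing $g_{N}$ through $\tilde{g}_{N}$ via $u_{i}=(j_{i}-c_{i})/N$, the right side is a Riemann sum which, under the hypothesis $\tilde{g}_{N}\to f$ in $L^{2}$, converges to $\sum_{\sigma\in S_{k}}\langle f,\,f\circ\sigma\rangle = k!\,\|\widetilde{f}\|_{L^{2}}^{2}=\mathbb{E}[I_{k}(f)^{2}]$, the extra contributions from the discrete diagonal being negligible since the diagonals of $\mathbb{R}^{k}$ are Lebesgue-null. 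The same identity applied to $g_{N}-g_{N}'$ together with Cauchy--Schwarz gives the contraction bound $\mathbb{E}[Q_{k}(g_{N}-g_{N}')^{2}]\le k!\,\|g_{N}-g_{N}'\|_{\ell^{2}(\mathbb{Z}^{k})}^{2}=k!\,\|\tilde{g}_{N}-\tilde{g}_{N}'\|_{L^{2}(\mathbb{R}^{k})}^{2}$, the last equality being an exact identity from the definition of the tilde-operation; this is the tool for the approximation step.

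\textbf{Step 1 (elementary integrands).} I would first prove the conclusion when $f=\sum a_{i_{1}\cdots i_{k}}\mathbf{1}_{A_{i_{1}}\times\cdots\times A_{i_{k}}}$ is a finite combination of products of indicators of pairwise disjoint bounded intervals $A_{1},\ldots,A_{m}\subset\mathbb{R}$, with $a_{i_{1}\cdots i_{k}}=0$ unless $i_{1},\ldots,i_{k}$ are distinct. For such $f$ put $g_{N}^{(f)}(j_{1},\ldots,j_{k}):=N^{-k/2}f\bigl((j_{1}-c_{1})/N,\ldots,(j_{k}-c_{k})/N\bigr)$, so that $\tilde{g}_{N}^{(f)}\to f$ in $L^{2}$. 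Because $a$ vanishes off the distinct indices and the $A_{i}$ are disjoint, for $N$ large every nonzero term of $Q_{k}(g_{N}^{(f)})$ has its $k$ indices in $k$ distinct blocks, the restriction to distinct indices becomes automatic, and $Q_{k}(g_{N}^{(f)})$ is the fixed degree-$k$ polynomial $\sum_{i_{1},\ldots,i_{k}\ \text{distinct}}a_{i_{1}\cdots i_{k}}\prod_{r=1}^{k}S_{N}(A_{i_{r}})$ in the block sums $S_{N}(A):=N^{-1/2}\sum_{j:\,(j-c)/N\in A}\varepsilon_{j}$ (the shifts being immaterial for the limit, and blocks attached to distinct intervals being disjoint for $N$ large). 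By the Lindeberg central limit theorem for i.i.d.\ summands with finite variance and the independence of partial sums over disjoint index blocks, $(S_{N}(A_{1}),\ldots,S_{N}(A_{m}))$ converges in law to a centred Gaussian vector with independent components of variances $|A_{1}|,\ldots,|A_{m}|$, i.e.\ to $(B(A_{1}),\ldots,B(A_{m}))$; the continuous mapping theorem then yields $Q_{k}(g_{N}^{(f)})\Rightarrow\sum a_{i_{1}\cdots i_{k}}\prod_{r}B(A_{i_{r}})=I_{k}(f)$, where the last equality uses that $a$ vanishes on non-distinct indices so the product indeed produces the off-diagonal multiple Wiener--It\^{o} integral. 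No moment of $\varepsilon_{n}$ beyond variance $1$ is used.

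\textbf{Step 2 (approximation).} Integrands of the above type are dense in $L^{2}(\mathbb{R}^{k})$: indicators of boxes span, subdividing along a common grid makes the coordinate intervals pairwise disjoint, and excising a small neighbourhood of the diagonals costs arbitrarily little in $L^{2}$ since those are null. Given $f\in L^{2}(\mathbb{R}^{k})$ with $\tilde{g}_{N}\to f$ in $L^{2}$ and $\eta>0$, pick such an elementary $f_{\eta}$ with $\|f-f_{\eta}\|_{L^{2}}<\eta$. Then $I_{k}(f_{\eta})\to I_{k}(f)$ in $L^{2}$ as $\eta\to0$ by the isometry of multiple Wiener--It\^{o} integrals; $Q_{k}(g_{N}^{(f_{\eta})})\Rightarrow I_{k}(f_{\eta})$ as $N\to\infty$ by Step~1; and by linearity of $Q_{k}$, the contraction bound, and $\tilde{g}_{N}\to f$, $\tilde{g}_{N}^{(f_{\eta})}\to f_{\eta}$ in $L^{2}$,
\[
\limsup_{N\to\infty}\mathbb{E}\bigl[\bigl(Q_{k}(g_{N})-Q_{k}\bigl(g_{N}^{(f_{\eta})}\bigr)\bigr)^{2}\bigr]\le k!\,\|f-f_{\eta}\|_{L^{2}}^{2}<k!\,\eta^{2}.
\]
The standard approximation lemma for weak convergence (let $N\to\infty$, then $\eta\to0$) then gives $Q_{k}(g_{N})\Rightarrow I_{k}(f)=\int_{\mathbb{R}^{k}}f\,dB^{\otimes k}$, which is the assertion. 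When the $g_{N}$ and $f$ additionally depend on a time parameter, the finite-dimensional statement $\longrightarrowfdd$ follows from this single-integral case by applying it to arbitrary linear combinations $\sum_{\ell}\alpha_{\ell}g_{N}(\cdot\,;t_{\ell})$ via the Cram\'er--Wold device.

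The step I expect to be the main obstacle is not any single estimate but keeping the bookkeeping near the diagonals and around the coordinate shifts $c_{i}$ uniform: one must check that $f_{\eta}$ can be chosen to vanish near all diagonals so that Step~1 applies verbatim, that $g_{N}^{(f_{\eta})}$ really satisfies $\tilde{g}_{N}^{(f_{\eta})}\to f_{\eta}$ in $L^{2}$ with the prescribed shifts, and that the contraction bound survives the permutation cross-terms $\langle\cdot,\cdot\circ\sigma\rangle$ and not merely the term $\|\cdot\|^{2}$ --- all of which come down to the elementary inequality $\sum_{j}^{'}h(j)\,h(\sigma j)\le\|h\|_{\ell^{2}}^{2}$ and to the fact that omitting the diagonals removes any dependence on higher moments of $\varepsilon_{n}$, but which should be written out carefully.
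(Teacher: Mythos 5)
The paper does not actually prove this proposition: its ``proof'' is a one-line citation to Proposition~4.1 of Bai and Taqqu and Corollary~4.7.1 of Giraitis, Koul and Surgailis. Your argument is, in substance, the standard proof given in those references --- reduction to off-diagonal elementary integrands, a multivariate CLT for the normalized block sums $S_N(A_i)$ together with the continuous mapping theorem, and the $L^2$ contraction bound $\mathbb{E}[Q_k(h)^2]\le k!\,\|h\|_{\ell^2}^2$ (valid with only second moments precisely because the diagonals are excluded) to transfer the limit to general $f$ via the usual triangular approximation lemma --- and I find it correct. The one place that deserves the care you already flag is Step~1: because the coordinate shifts $c_1,\dots,c_k$ differ, blocks attached to abutting intervals can overlap in $O(1)$ indices, and any such overlap reintroduces terms containing $\varepsilon_j^2$, whose control would require a fourth moment that is not assumed. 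The clean fix, consistent with your density argument, is to choose the elementary intervals $A_i$ separated by positive gaps (an arbitrarily small $L^2$ perturbation), so that for large $N$ the index blocks are genuinely disjoint, the product $\prod_r S_N(A_{i_r})$ contains no diagonal terms, and the restriction to distinct indices in $Q_k$ is automatic.
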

\begin{proof}
See Proposition~4.1 in \cite{BaiTaqqu} and also Corollary~4.7.1 in
\cite{koul}.\vadjust{\eject}
\end{proof}

Define the discrete chaos process
%
\begin{equation}
\label{eq:chaostempereddiscrete} Y^{\lambda,k}(n):=\sum_{(i_1,i_2,\ldots,i_k)\in{\mathbb
{Z}}^{k}}^{'}C^{\lambda}(i_1,i_2,
\ldots,i_k)\varepsilon _{n-i_1}\ldots\varepsilon_{n-i_k},
\end{equation}
where the prime $'$ indicates exclusion of the diagonals $i_{p}=i_q$,
$p\neq q$, $\{\varepsilon_n\}$ is as before, and
%
\begin{equation}
\label{eq:Cdef} C^{\lambda}(i_1,i_2,
\ldots,i_k)=\prod_{j=1}^{k}(i_j)^{d-1}_{+}e^{-\lambda(i_j)_{+}}
\end{equation}
for $d\in(\frac{1}{2}-\frac{1}{2k},\infty)$ and $\lambda>0$. Now, consider
\begin{equation*}
S^{\lambda}_{N}(t)=\sum_{n=1}^{[Nt]}Y^{\lambda,k}(n),
\quad 0\leq t\leq1.
\end{equation*}
\begin{thm}\label{thm:fdd theorem}
Let $Y^{\lambda,k}(n)$ be the discrete chaos process given by
\eqref{eq:chaostempereddiscrete}. Then
%
\begin{equation}
\frac{1}{N^{H}}S^{\frac{\lambda}{N}}_{N}(t)\Rightarrow
Z^{k}_{H,\lambda}(t),
\end{equation}
where $\Rightarrow$ means weak convergence in the Skorokhod space
$D[0,1]$ with uniform metric, $Z^{k}_{H,\lambda}(t)$ is the tempered
Hermite process in \eqref{eq:THPdefn}, and $H=1+kd-\frac{k}{2}$.
\end{thm}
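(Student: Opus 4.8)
The plan is to establish the result in two stages: convergence of the finite-dimensional distributions via Proposition~\ref{prop:koultaqquptoposition}, and then tightness in $D[0,1]$.

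First I would recast $\frac{1}{N^{H}}S^{\lambda/N}_{N}(t)$ in the chaos form~\eqref{eq:Qdefinition}. Substituting $j_{p}=n-i_{p}$ in~\eqref{eq:chaostempereddiscrete} and summing over $n=1,\dots,[Nt]$ gives $\frac{1}{N^{H}}S^{\lambda/N}_{N}(t)=Q_{k}(g_{N})$ with
\[
g_{N}(j_{1},\dots,j_{k})=\frac{1}{N^{H}}\sum_{n=1}^{[Nt]}\prod_{p=1}^{k}(n-j_{p})_{+}^{d-1}e^{-(\lambda/N)(n-j_{p})_{+}},
\]
and the exclusion of diagonals is preserved because $i_{p}=i_{q}\Leftrightarrow j_{p}=j_{q}$. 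Taking $c_{1}=\dots=c_{k}=0$ in Proposition~\ref{prop:koultaqquptoposition} and using $H=1+kd-\frac{k}{2}$, so that $N^{k/2-H}=N^{-1-k(d-1)}$, the rescaled kernel is
\[
\tilde g_{N}(u_{1},\dots,u_{k})=\frac{1}{N}\sum_{n=1}^{[Nt]}\prod_{p=1}^{k}\biggl(\frac{(n-[u_{p}N])_{+}}{N}\biggr)^{d-1}e^{-\lambda(n-[u_{p}N])_{+}/N}.
\]
This is a Riemann sum with mesh $1/N$ in the variable $s=n/N$ of the integrand in~\eqref{eq:integrand} with tempering parameter $\lambda$, so for almost every $(u_{1},\dots,u_{k})$ one has $\tilde g_{N}(u_{1},\dots,u_{k})\to h_{t}(u_{1},\dots,u_{k})$, where $h_{t}$ is exactly the $L^{2}(\mathbb{R}^{k})$ kernel of $Z^{k}_{H,\lambda}(t)$ in~\eqref{eq:THPdefn}; by Lemma~\ref{lem:g squar integrable} the candidate limit $f=h_{t}$ is square integrable.

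The main obstacle is to upgrade this pointwise statement to the $L^{2}(\mathbb{R}^{k})$ convergence $\int_{\mathbb{R}^{k}}|\tilde g_{N}-h_{t}|^{2}\,du_{1}\cdots du_{k}\to0$ required by Proposition~\ref{prop:koultaqquptoposition}. Two features make this delicate: the factor $(\cdot)_{+}^{d-1}$ is singular when $d<1$, so the discretization must be controlled on the strips where $n\approx[u_{p}N]$; and at finite $N$ the decay in the variables $u_{p}$ comes only from the weak tempering $e^{-(\lambda/N)(\cdot)}$, so the tail bounds must be uniform in $N$. I would handle this by producing an $N$-independent $L^{2}(\mathbb{R}^{k})$ majorant for $\tilde g_{N}$---following the scheme in the proof of Lemma~\ref{lem:g squar integrable}, via the substitution $w_{i}=s-y_{i}$ and the estimate $K_{\nu}(z)<z^{-\nu}2^{\nu-1}\varGamma(\nu)$ for the modified Bessel function---and then invoking dominated convergence; alternatively, one may combine the almost-everywhere convergence with convergence of the norms $\|\tilde g_{N}\|_{L^{2}(\mathbb{R}^{k})}^{2}\to\|h_{t}\|_{L^{2}(\mathbb{R}^{k})}^{2}$ (note $\|\tilde g_{N}\|_{L^{2}}^{2}=\sum_{j\in\mathbb{Z}^{k}}g_{N}(j)^{2}$, a discrete-to-continuum version of the computation in~\eqref{eq:variancegeneral}), which in the Hilbert space $L^{2}(\mathbb{R}^{k})$ forces strong convergence. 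With the $L^{2}$ convergence in hand, Proposition~\ref{prop:koultaqquptoposition} gives $Q_{k}(g_{N})\longrightarrowfdd Z^{k}_{H,\lambda}(t)$.

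It remains to prove tightness in $D[0,1]$. Using the isometry for $Q_{k}$, the change of variables from~\eqref{eq:variancegeneral}, and stationarity of the increments of $S^{\lambda/N}_{N}$, I would establish a uniform-in-$N$ bound $\mathbb{E}\bigl|\frac{1}{N^{H}}\bigl(S^{\lambda/N}_{N}(t)-S^{\lambda/N}_{N}(s)\bigr)\bigr|^{2}\le C\,|t-s|^{\min\{2H,2\}}$ for $|t-s|\ge1/N$, mirroring the increment estimate for $Z^{k}_{H,\lambda}$ used in the proof that it has a continuous version, while for $|t-s|<1/N$ the increment reduces to a single term $Y^{\lambda/N,k}(n)$ and is bounded directly (the borderline value $H=1$, where this second moment grows only linearly in $|t-s|$, calls for a slightly finer estimate). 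Since $H>\frac{1}{2}$, the H\"older exponent $\min\{2H,2\}$ exceeds one, so Billingsley's moment criterion yields tightness, and since the limit $Z^{k}_{H,\lambda}$ is continuous, this gives weak convergence in $D[0,1]$ under the uniform metric; combined with the finite-dimensional convergence this proves $\frac{1}{N^{H}}S^{\lambda/N}_{N}(t)\Rightarrow Z^{k}_{H,\lambda}(t)$.
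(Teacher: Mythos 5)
Your proposal is correct and follows essentially the same route as the paper: rewrite $N^{-H}S^{\lambda/N}_{N}(t)$ as a discrete chaos $Q_{k}(g_{N})$, verify the $L^{2}(\mathbb{R}^{k})$ convergence of the rescaled kernels to $h_{t}$ required by Proposition~\ref{prop:koultaqquptoposition} by combining a.e.\ (Riemann-sum) convergence with an $N$-independent square-integrable majorant (the paper constructs explicit majorants $g_{1},g_{2},g_{3}$ by shifting the argument by $\pm1$ in the three ranges $d=1$, $0<d<1$, $d>1$) and dominated convergence, and then obtain tightness from the uniform second-moment increment bound $\mathbb{E}|N^{-H}(S^{\lambda/N}_{N}(t)-S^{\lambda/N}_{N}(s))|^{2}\le C|t-s|^{\min\{2H,2\}}$ inherited from Lemma~\ref{lem:g squar integrable}. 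Your observation that the borderline case $H=1$ needs a slightly finer estimate is a fair point that the paper itself glosses over.
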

\begin{rem}
The Lamperti's theorem \cite{lamperti} states that if
\begin{equation*}
\frac{1}{d(N)}\sum_{k=1}^{[Nt]}Y_{k}
\longrightarrowfdd Z(t)
\end{equation*}
and $d(N)\to\infty$ as $N\to\infty$, where $\{Y_k\}$ is stationary,
then $\{Z(t)\}_{t\geq0}$ is self-similar with stationary increments
($\longrightarrowfdd$ means the convergence of finite-dimensional
distributions). In our case, since the stationary processes ${\{
Y^{\frac{\lambda}{N}}_{k}\}}$ depend on $N$ through the parameter
$\lambda$, the limit process $\{Z^{k}_{H,\lambda}(t)\}_{t\geq0}$
need not be a self-similar process. Therefore, the result of Theorem
\ref{thm:fdd theorem} does not contradict the Lamperti theorem.
\end{rem}

\begin{proof}[Proof of Theorem~\ref{thm:fdd theorem}]
First, we show that
\begin{align}
\label{eq:fddgeneralchaos} %
\frac{1}{N^H}S^{\frac{\lambda}{N}}_{N}(t)&= \frac{1}{N^{H}}\sum_{n=1}^{[Nt]}Y^{\frac{\lambda}{N},k}(n)\notag\\
&=\sum_{(i_1,\ldots,i_k)\in\mathbb{Z}^{k}}\frac{1}{N^H}\sum_{n=1}^{[Nt]}C^{\frac{\lambda}{N}}(n-i_1,\ldots,n-i_k)\varepsilon _{i_1}\ldots\varepsilon_{i_k}\notag\\
&=Q_{k}(h_{t,N})\longrightarrowfdd Z^{k}_{H,\lambda}(t)\quad\text {as}\ N\to\infty,
\end{align}
where
\begin{equation*}
h_{t,N}(i_1,\ldots,i_k):=\frac{1}{N^H}
\sum_{n=1}^{[Nt]}C^{\frac
{\lambda}{N}}(n-i_1,
\ldots,n-i_k),
\end{equation*}
and $Q_{k}(\cdot)$ is defined by \eqref{eq:Qdefinition}. In order to
show \eqref{eq:fddgeneralchaos}, we just need to check that
%
\begin{equation}
\label{eq:firstpartoftheproof}
\big\|{\tilde{h}}_{t,N}(y_1,\ldots,y_k)-h_{t}(y_1,\ldots,y_k)\big\| _{L^{2}({\mathbb{R}}^k)}\to0
\end{equation}
as $N\to\infty$, where
\begin{equation*}
\begin{split} {\tilde{h}}_{t,N}(y_1,
\ldots,y_k)&:=N^{\frac
{k}{2}}h_{t,N}\bigl([Ny_1]+1,
\ldots,[Ny_k]+1\bigr)
\\
&=\frac{N^{\frac{k}{2}}}{N^H}\sum_{n=1}^{[Nt]}C^{\frac{\lambda
}{N}}
\bigl(n-[Ny_{1}]-1,\ldots,n-[Ny_{k}]-1\bigr), \end{split}
\end{equation*}
and $h_{t}(y_1,\ldots,y_k)$ is given by \eqref{eq:integrand}. Write
\begin{equation*}
\begin{split} {\tilde{h}}_{t,N}(y_1,
\ldots,y_k)&=\frac{N^{\frac{k}{2}}}{N^H}\sum_{n=1}^{[Nt]}C^{\frac{\lambda}{N}}
\bigl(n-[Ny_{1}]-1,\ldots ,n-[Ny_{k}]-1\bigr)
\\
&=\frac{1}{N^{1+kd-k}}\sum_{n=1}^{[Nt]}\prod
_{i=1}^{k}\bigl(n-[Ny_{i}]-1
\bigr)_{+}^{d-1}e^{-\frac{\lambda
}{N}(n-[Ny_{i}]-1)_{+}}
\\
&=\frac{1}{N}\sum_{n=1}^{[Nt]}\prod
_{i=1}^{k} \biggl(\frac
{n-[Ny_{i}]-1}{N}
\biggr)_{+}^{d-1}e^{-\lambda (\frac
{n-[Ny_{i}]-1}{N} )_{+}}
\\
&=\int_{0}^{t}\prod_{i=1}^{k}
\biggl(\frac{[Ns]-[Ny_i]}{N} \biggr)_{+}^{d-1}e^{-\lambda (\frac{[Ns]-[Ny_i]}{N} )_{+}}\,ds. \end{split} %
\end{equation*}
Let $d=1$. In this case,
\begin{equation*}
\biggl(\frac{[Ns]-[Ny]}{N} \biggr)^{d-1}_{+}e^{-\lambda (\frac
{[Ns]-[Ny]}{N} )_{+}}=e^{-\lambda (\frac{[Ns]-[Ny]}{N} )_{+}}
\leq e^{-\lambda(s-y)_{+}}e^{\frac{\lambda}{N}}
\end{equation*}
for all $N\geq1$, and hence
\begin{equation*}
\begin{split}
\Bigg|\prod_{i=1}^{k}e^{-\lambda (\frac{[Ns]-[Ny_i]}{N})_{+}}\Bigg|&\leq e^{\lambda k} \prod_{i=1}^{k}e^{-\lambda(s-y_i)_{+}}\\
&=:g_{1}(s-y_1,\ldots,s-y_{k}). \end{split}
\end{equation*}
Next, consider $0<d<1$. Since $[Ns]-[Ny]>Ns-Ny-1$, we get
\begin{equation*}
\biggl(\frac{[Ns]-[Ny]}{N} \biggr)^{d-1}_{+}< \biggl(
\frac
{Ns-Ny-1}{N} \biggr)^{d-1}_{+} \leq(s-y-1)^{d-1}_{+}
\end{equation*}
for all $N\geq1$, and hence
\begin{equation*}
\begin{split} \Bigg|\prod_{i=1}^{k}
\biggl(\frac{[Ns]-[Ny_i]}{N} \biggr)^{d-1}_{+}e^{-\lambda (\frac{[Ns]-[Ny_i]}{N} )_{+}}
\Bigg|&< \prod_{i=1}^{k}(s-y_{i}-1)^{d-1}_{+}e^{-\lambda(s-y_{i}-1)_{+}}
\\
&=:g_{2}(s-y_1,\ldots,s-y_{k}). \end{split}
\end{equation*}
Finally, suppose that $d>1$. Since $[Ns]-[Ny]<Ns-Ny+1$, we get
\begin{equation*}
\biggl(\frac{[Ns]-[Ny]}{N} \biggr)^{d-1}_{+}< \biggl(
\frac
{Ns-Ny+1}{N} \biggr)^{d-1}_{+} \leq(s-y+1)^{d-1}_{+}
\end{equation*}
for all $N\geq1$, and hence
\begin{equation*}
\begin{split} \Bigg|\prod_{i=1}^{k}
\biggl(\frac{[Ns]-[Ny_i]}{N} \biggr)^{d-1}_{+}e^{-\lambda (\frac{[Ns]-[Ny_i]}{N} )_{+}}
\Bigg|&< \prod_{i=1}^{k}(s-y_{i}+1)^{d-1}_{+}e^{-\lambda(s-y_{i}-1)_{+}}
\\
&=:g_{3}(s-y_1,\ldots,s-y_{k}). \end{split}
\end{equation*}
By the similar argument of Lemma~\ref{lem:g squar integrable}, we can
verify that
\begin{equation*}
\int_{\mathbb{R}^k}^{'} \Biggl(\int_{0}^{t}g_{i}(s-y_1,
\ldots ,s-y_{k})\,ds \Biggr)^2\,dy_{1},
\ldots,dy_{k}<\infty
\end{equation*}
for $i=1,2,3$. On the other hand, since $C^{\lambda}(i_1,\ldots
,i_{k})$ is continuous a.e.,\break $C^{\lambda}(\frac
{[Ns]-[Ny_1]}{N},\ldots,\frac{[Ns]-[Ny_k]}{N})$ converges a.e. to
$C^{\lambda}(s-y_1,\ldots,s-y_k)$ as $N\to\infty$. Now apply the
dominated convergence theorem to get the desired result~\eqref
{eq:firstpartoftheproof}.

In order to show the tightness, we need to verify that
%
\begin{equation}
\label{eq:tightnesscondition}
\mathbb{E} \big|N^{-H} \bigl(S^{\frac{\lambda}{N}}_{N}(t)-S^{\frac
{\lambda}{N}}_{N}(s)
\bigr) \big|^{2\gamma}\leq C\big|F_{n}(t)-F_{n}(s)\big|^{2\alpha},
\quad 0\leq s<t\leq1,
\end{equation}
where $\gamma>0$ and $\alpha>\frac{1}{2}$ (here $\{F_{n}\}_{n\geq
1}$ is a sequence of nondecreasing continuous functions on $[0,1]$ that
are uniformly bounded and satisfy
\begin{equation*}
\lim_{\delta\to0}\limsup_{n\to\infty}
\omega_{\delta}(F_n)=0,
\end{equation*}
where $\omega_{\delta}(F):=\sup_{|t-s|<\delta}|F(t)-F(s)|$ for
$\delta>0$). See Lemma~4.4.1 in \cite{koul} for more details. Observe that
\begin{align*}
\label{eq:tightness1} %
S^{\frac{\lambda}{N}}_{N}(t)&=\sum_{n=1}^{[Nt]}Y^{\frac{\lambda}{N},k}(n)=\sum_{(i_1,\ldots,i_k)\in\mathbb{Z}}^{'}\sum_{n=1}^{[Nt]}C^{\frac{\lambda}{N},k}(n-i_1,\ldots,n-i_k)\varepsilon _{i_1}\ldots\varepsilon_{i_k}\\
&=\sum_{(i_1,\ldots,i_k)\in\mathbb{Z}}^{'}\sum_{n=1}^{[Nt]}\prod_{j=1}^{k}(n-i_j)^{d-1}_{+}e^{-\frac{\lambda}{N}(n-i_j)_{+}}\varepsilon_{i_1}\ldots\varepsilon_{i_k}\\
&=\sum_{(i_1,\ldots,i_k)\in\mathbb{Z}}^{'}N^{kd-k+1} \Biggl[\frac{1}{N}\sum_{n=1}^{[Nt]}\prod_{j=1}^{k} \biggl(\frac{n-i_j}{N}\biggr)^{d-1}_{+}e^{-\frac{\lambda}{N}(n-i_j)_{+}} \Biggr]\varepsilon_{i_1}\ldots\varepsilon_{i_k}\\
&=\sum_{(i_1,\ldots,i_k)\in\mathbb{Z}}^{'}N^{kd-k+1} \Biggl[\int_{0}^{t}\prod_{j=1}^{k}\biggl(\frac{[Ny]+1-i_j}{N} \biggr)^{d-1}_{+}\\
&\quad \times e^{-\frac{\lambda}{N}([Ny]+1-i_j)_{+}}\,dy\Biggr] \varepsilon_{i_1}\ldots\varepsilon_{i_k}\\
&=N\sum_{(i_1,\ldots,i_k)\in\mathbb{Z}}^{'} \Biggl[\int_{0}^{t}\prod_{j=1}^{k}\bigl([Ny]+1-i_j\bigr)^{d-1}_{+}e^{-\frac{\lambda}{N}([Ny]+1-i_j)_{+}}\,dy\Biggr] \varepsilon_{i_1}\ldots\varepsilon_{i_k}.
\end{align*}
Therefore, we get
\begin{align*}
&\mathbb{E} \big|N^{-H}\bigl(S^{\frac{\lambda}{N}}_{N}(t)\,{-}\,S^{\frac{\lambda}{N}}_{N}(s)\bigr) \big|^2\notag\\
&\quad=N^{2-2H}\\
 &\qquad\times\mathbb{E} \Bigg|\sum_{(i_1,\ldots,i_k)\in\mathbb{Z}}^{'} \Biggl[\int_{0}^{t-s}\prod_{j=1}^{k}\bigl([Ny]+1-i_j\bigr)^{d-1}_{+}e^{-\frac{\lambda}{N}([Ny]+1-i_j)_{+}}\,dy\Biggr] \varepsilon_{i_1}\ldots\varepsilon_{i_k}\Bigg|^{2}\\
&\quad\leq k!N^{2-2H}\sum_{(i_1,\ldots,i_k)\in\mathbb{Z}}^{'}\Biggl[\int_{0}^{t-s}\prod_{j=1}^{k}\bigl([Ny]+1-i_j\bigr)^{d-1}_{+}e^{-\frac{\lambda}{N}([Ny]+1-i_j)_{+}}\,dy \Biggr]^{2}\\
&\quad=k!N^{2-2H+k}\\
&\qquad \times\int_{\mathbb{R}^k}^{'} \Biggl[\int_{0}^{t-s}\prod_{j=1}^{k}\bigl([Ny]+1-[Nx_j]\bigr)^{d-1}_{+}e^{-\frac{\lambda}{N}([Ny]+1-[Nx_j])_{+}}\,dy \Biggr]^{2}\,dx_{1}\ldots dx_{k}.
\end{align*}
Now, we consider two different cases corresponding with the range of
$d$. First, assume that $\frac{1}{2}-\frac{1}{2k}<d\leq1$
(equivalently, $\frac{1}{2}<H\leq1+\frac{k}{2}$):
Since $[Ny]-[Nx_j]+1>Ny-Nx_{j}$, we can write
\begin{align*}
&\mathbb{E} \big|N^{-H}\bigl(S^{\frac{\lambda}{N}}_{N}(t)-S^{\frac{\lambda}{N}}_{N}(s)\bigr) \big|^2\notag\\
&\quad\leq k!N^{2-2H+k}\\
&\qquad \times\int_{\mathbb{R}^k}^{'} \Biggl[\int_{0}^{t-s}\prod_{j=1}^{k}\bigl([Ny]+1-[Nx_j]\bigr)^{d-1}_{+}e^{-\frac{\lambda}{N}([Ny]+1-[Nx_j])_{+}}\,dy \Biggr]^{2}\,dx_{1}\ldots dx_{k}\\
&\quad \leq k!N^{2-2H+2kd-k}\int_{\mathbb{R}^k}^{'} \Biggl[\int_{0}^{t-s}\prod_{j=1}^{k}(y-x_j)^{d-1}_{+}e^{-\lambda(y-x_j)_{+}}\,dy\Biggr]^{2} \,dx_{1}\ldots dx_{k}\\
&\quad =k!\int_{\mathbb{R}^k}^{'} \Biggl[\int_{0}^{t-s}\prod_{j=1}^{k}(y-x_j)^{d-1}_{+}e^{-\lambda(y-x_j)_{+}}\,dy\Biggr]^{2} \,dx_{1}\ldots dx_{k}.
\end{align*}
Now, let $d>1$. Since $Ny-Nx_{j}<[Ny]-[Nx_j]+1<Ny-Nx_{j}+N$, we have
\begin{align*}
&\mathbb{E} \big|N^{-H}\bigl(S^{\frac{\lambda}{N}}_{N}(t)-S^{\frac{\lambda}{N}}_{N}(s)\bigr) \big|^2\notag\\
&\quad\leq k!N^{2-2H+k}\\
&\qquad \times\int_{\mathbb{R}^k}^{'} \Biggl[\int_{0}^{t-s}\prod_{j=1}^{k}\bigl([Ny]+1-[Nx_j]\bigr)^{d-1}_{+}e^{-\frac{\lambda}{N}([Ny]+1-[Nx_j])_{+}}\,dy \Biggr]^{2}\,dx_{1}\ldots dx_{k}\\
&\quad \leq k!\int_{\mathbb{R}^k}^{'} \Biggl[\int_{0}^{t-s}\prod_{j=1}^{k}(y-x_j+1)^{d-1}_{+}e^{-\lambda(y-x_j)_{+}}\,dy\Biggr]^{2} \,dx_{1}\ldots dx_{k}\\
&\quad =k!\int_{\mathbb{R}^k}^{'} \Biggl[\int_{0}^{t-s}\prod_{j=1}^{k}(y-x_j+1)^{d-1}e^{-\lambda(y-x_j+1)}e^{\lambda}\mathbf {1}_{\{y>x_j\}}\,dy \Biggr]^{2} \,dx_{1}\ldots dx_{k}\\
&\quad =e^{2\lambda k}k!\int_{\mathbb{R}^k}^{'} \Biggl[\int_{0}^{t-s}\prod_{j=1}^{k}(y-z_j)^{d-1}e^{-\lambda(y-z_j)}\mathbf{1}_{\{y>z_j+1\}}\,dy \Biggr]^{2} \,dz_{1}\ldots dz_{k}\\
&\quad \leq e^{2\lambda k}k!\int_{\mathbb{R}^k}^{'} \Biggl[\int_{0}^{t-s}\prod_{j=1}^{k}(y-z_j)^{d-1}_{+}e^{-\lambda(y-z_j)_{+}}\,dy\Biggr]^{2} \,dz_{1}\ldots dz_{k}.
\end{align*}
Therefore,
\begin{align*}
&\mathbb{E} \big|N^{-H} \bigl(S^{\frac{\lambda}{N}}_{N}(t)-S^{\frac{\lambda}{N}}_{N}(s)\bigr) \big|^2\\
&\quad \leq e^{2\lambda k}k!\int_{\mathbb{R}^k}^{'}\Biggl[\int_{0}^{t-s}\prod_{j=1}^{k}(y-z_j)^{d-1}_{+}e^{-\lambda(y-z_j)_{+}}\,dy\Biggr]^{2} \,dz_{1}\ldots dz_{k}
\end{align*}
for any $d>\frac{1}{2}-\frac{1}{2k}$ (equivalently, $H>\frac
{1}{2}$). According to the proof of Lemma~\ref{lem:g squar integrable},
\begin{equation*}
e^{2\lambda k}k! \int_{\mathbb{R}^k}^{'} \Biggl[\int
_{0}^{t-s}\prod_{j=1}^{k}(y-z_j)^{d-1}_{+}e^{-\lambda(y-z_j)_{+}}
\,dy \Biggr]^{2}\, dz_{1}\ldots dz_{k}\leq
C|t-s|^{2H}
\end{equation*}
for $\frac{1}{2}<H<1$ and
\begin{equation*}
e^{2\lambda k}k! \int_{\mathbb{R}^k}^{'} \Biggl[\int
_{0}^{t-s}\prod_{j=1}^{k}(y-z_j)^{d-1}_{+}e^{-\lambda(y-z_j)_{+}}
\,dy \Biggr]^{2} \,dz_{1}\ldots dz_{k}\leq
C|t-s|^{2}
\end{equation*}
for $H>1$. Now, it remains to apply \eqref{eq:tightnesscondition} by
selecting $\gamma=1$, $\alpha=\min\{H,1\}$, and $F_{n}(t)=t$ to get
the desired result.
\end{proof}

\section*{Acknowledgments}
The author would like to thank Hira Koul and Mark Meerschaert from
Michigan State University for fruitful discussions.


%
\end{document}